\newtheorem{theorem}{Theorem}
\newtheorem{corollary}[theorem]{Corollary}
\newtheorem{lemma}[theorem]{Lemma}
\newtheorem{proposition}[theorem]{Proposition}
\theoremstyle{definition}
\newtheorem{definition}{Definition}
\theoremstyle{remark}
\newcommand{\cA}{\mathcal{A}}
\newcommand{\cQ}{\mathcal{Q}}
\newcommand{\cP}{\mathcal{P}}
\newcommand{\cF}{\mathcal{F}}
\newcommand{\cV}{\mathcal{V}}
\begin{document}

\newcommand{\Addresses}{
\bigskip
\footnotesize

\noindent James~Brownlie, \textsc{Trinity College, University of Cambridge, CB2 1TQ, United Kingdom.}\par\noindent\nopagebreak\textit{Email address: }\texttt{jb2432@cantab.ac.uk}

\medskip

\noindent Sean~Jaffe, \textsc{Trinity College, University of Cambridge, CB2 1TQ, United Kingdom.}\par\noindent\nopagebreak\textit{Email address: }\texttt{scj47@cam.ac.uk}}

\pagestyle{fancy}
\fancyhf{}
\fancyhead [LE, RO] {\thepage}
\fancyhead [CE] {JAMES BROWNLIE AND SEAN JAFFE}
\fancyhead [CO] {THE INDUCED SATURATION NUMBER FOR $\mathcal{V}_3$ IS LINEAR}
\renewcommand{\headrulewidth}{0pt}
\renewcommand{\l}{\rule{6em}{1pt}\ }
\title{\Large{\textbf{THE INDUCED SATURATION NUMBER FOR \(\mathcal{V}_3\) IS LINEAR}}}
\author{JAMES BROWNLIE AND SEAN JAFFE}
\date{ }
\maketitle
\begin{abstract}

Given a poset $\mathcal{P}$, a family $\mathcal{F}$ of elements in the Boolean lattice is said to be $\mathcal{P}$-saturated if $\mathcal{F}$ does not contain an induced copy $\mathcal P$, but every proper superset of $\mathcal{F}$ contains one. The minimum size of a $\mathcal P$-saturated family in the $n$-dimensional Boolean lattice is denoted by $sat^*(n,\mathcal{P})$.\par In this paper, we consider the poset $\mathcal V_3$ (the four element poset with one minimal element and three incomparable maximal elements) and show that $sat^*(n,\mathcal{V}_3)\geq \frac{n}{2}$. This represents the first linear lower bound for $sat^*(n,\mathcal{V}_3)$, improving upon the previously best-known bound of $2\sqrt{n}$. Our result establishes that $sat^*(n,\mathcal{V}_3) = \Theta(n)$.

\end{abstract}

\section{Introduction}
A partially ordered set, or poset for short, is a set $\mathcal P$ equipped with a binary relation $\leq$ that is transitive, antisymmetric and reflexive. We say that a poset \((\mathcal{Q}, \leq')\) contains an induced copy of a poset \((\mathcal{P}, \leq)\) if there exists an injective function \(f: \mathcal{P} \to \mathcal{Q}\) such that \((\mathcal{P}, \leq)\) and \((f(\mathcal{P}), \leq')\) are isomorphic as posets. That is, for $u,v\in \cP$, we have $u\leq v $ if and only if $f(u)\leq' f(v)$. If a poset $\cQ$ does not contain an induced copy of a poset $\cP$, then we may say that $\cQ$ is $\cP$-free. Additionally, we say that a family \(\mathcal{F}\) of subsets of \([n]\) is \(\mathcal{P}\)-saturated if \(\mathcal{F}\) is \(\mathcal{P}\)-free, but \(\mathcal{F}\cup \{S\}\) is not for any $S\in \cP([n])\backslash \cF$. We write \(sat^*(n, \mathcal{P})\) for the minimum size of a \(\mathcal{P}\)-saturated family in \(\cP([n])\), and call this quantity the induced saturation number of \(\mathcal{P}\).

The study of saturation has a rich history. It was first considered in the context of graphs by by Erd\H{o}s, Hajnal and Moon~\cite{erdos1964problem} in 1964. This later inspired Gerbner, Keszegh, Lemons, Palmer, P{\'a}lv{\"o}lgyi and Patk{\'o}s \cite{gerbner2013saturating} to investigate saturation for posets. Later  Ferrara, Kay, Krammer, Martin, Reiniger, Smith and Sullivan~\cite{ferrara2017saturation} considered induced saturation for posets in 2017. The definitions in poset saturation are closely analogous to those used in the study of graph saturation. We note that the analogue to induced poset saturation in graphs is much less natural to consider than induced saturation for posets as it requires the introduction of tri-graphs \cite{chudnovsky2006berge}.


Arguably the most famous and important conjecture pertaining to induced poset saturation is that for any poset $\cP$, either ${sat}^*(n,\cP)$ is bounded (as $n \to \infty$), or $sat^*(n,\cP) = \Theta(n)$. More precisely, it is believed that if $sat^*(n,\cP)$ is unbounded then $sat^*(n,\cP)\geq n+1$. Partial progress towards this conjecture was first made by Keszegh, Lemons, Martin, Pálvölgyi, and Patkós \cite{keszegh2021induced}, who proved that the saturation number of any poset is either bounded or at least $\log_2(n)$. This was later improved by Freschi, Piga, Sharifzadeh, and Trelown \cite{freschi2023induced}, who showed that $sat^*(n,\cP)$ is either bounded or at least $2\sqrt{n}$. Progress towards a general upper bound was made by Bastide, Groenland, Ivan, and Johnston \cite{polynomial} showed that the saturation number of any poset can be bounded above by a polynomial.

Beyond these general bounds, there are very few posets for which the saturation number has been shown to grow linearly.  Liu \cite{liu2025induced} introduced arguably the widest class of posets $\cP$ that satisfy $sat^*(n,\cP)= \Omega(n)$. Given two finite posets $\mathcal P$ and $\mathcal Q$, we form the poset $\mathcal P*\mathcal Q$ by putting a copy of $\mathcal P$ entirely on top of a copy of $\mathcal Q$. This is also know as the linear sum of $\mathcal P$ and $\mathcal Q$. Liu proved that any poset of the form $\cP*\cA_2$ satisfies $sat^*(n,\cP*\cA_2)\geq n+1$, where $\cA_2$ represents the antichain of size two. By symmetry, we also have $sat^*(n,\cA_2*\cP)\geq n+1$. A similar result was proven by Ivan and Jaffe \cite{ivan2025saturationnumberdiamondlinear}, who showed that if $\cP_1$ and $\cP_2$ are non-empty posets such that $\cP_1 $ doesn't have a unique minimal element and $\cP_2$ doesn't have a unique maximal element, then $sat^*(n,\cP_1*\cA_2*\cP_2) = \Omega(n)$. 

Excluding these two relatively narrow results, only a very small number of posets $\cP$ are known to satisfy $sat^*(n,\cP)= \Omega(n)$. Notable examples include the antichain \cite{bastide2024exact} and $2C_2$ (the incomparable union of two chains of length two) \cite{martin2025induced}.

In this paper, we study the four element poset with one minimal element and three incomparable maximal elements. This poset, which we denote $\cV_3$, is displayed in Figure 1 below. By symmetry, $sat^*(n,\cV_3) = sat^*(n,\Lambda_3)$ so our analysis of $\cV_3$ also applies to $\Lambda_3$ (the poset displayed in Figure 2).

\begin{figure}[hbt!]
    \centering
    \begin{subfigure}[b]{0.4\textwidth}
        \centering
         \begin{tikzpicture}
            \node (top) at (2,0) {$\bullet$}; 
            \node (left) at (0,0) {$\bullet$};
            \node (right) at (4,0) {$\bullet$};
            \node (bottom) at (2,-2) {$\bullet$};
            \draw  (left) -- (bottom) -- (right);
            \draw (bottom)-- (top);
        \end{tikzpicture}
        \captionsetup{labelformat=empty}
        \caption{\textbf{Figure 1:} \(\cV_3\)}
    \end{subfigure}
    \hfill
    \begin{subfigure}[b]{0.4\textwidth}
        \centering
        \begin{tikzpicture}
            \node (top) at (2,2) {$\bullet$}; 
            \node (left) at (0,0) {$\bullet$};
            \node (right) at (4,0) {$\bullet$};
            \node (bottom) at (2,0) {$\bullet$};
            \draw (top) -- (left);
            \draw (right) -- (top) -- (bottom);
        \end{tikzpicture}
        \captionsetup{labelformat=empty}
        \caption{\textbf{Figure 2:} \(\Lambda_3\)}
    \end{subfigure}
\end{figure}
\FloatBarrier
Although studied as early as 2017 \cite{ferrara2017saturation}, even the order of magnitude of the growth of their saturation numbers was not known until now. In fact, $\cV_3$ and $\Lambda_3$ arguably represent the two simplest posets for which the order of growth of their induced saturation numbers was previously unknown.

It is easy to see that ${sat}^*(n,\mathcal{V}_3) \leq 2n$, since the union of two full chains in $\cP([n])$ that are disjoint excluding at $\emptyset$ and $[n]$ forms a $\mathcal{V}_3$-saturated subset of the Boolean lattice. As for the lower bound, the first non-trivial result was established by Ferrara, Kay, Kramer, Martin, Reiniger, and Smith \cite{ferrara2017saturation}, who showed that ${sat}^*(n,\mathcal{V}_3) \geq \log_2(n)$. The only subsequent improvement came from Freschi, Piga, Sharifzadeh, and Trelown \cite{freschi2023induced}, who proved that ${sat}^*(n,\mathcal{V}_3) \geq 2\sqrt{n}$. We note that these bounds are not specific to $\mathcal{V}_3$, but follow from the fact $sat^*(n,\cV_3)$ is unbounded. 

In this paper, we prove that \({sat}^*(n, \mathcal{V}_3) \geq \frac{n}{2}\), which establishes that both \(sat^*(n, \mathcal{V}_3)\) and \(sat^*(n, \Lambda_3)\) are \(\Theta(n)\). This allows us to lower bound the saturation number for a wide range of other posets. 
\section{Main results}
As mentioned previously, Theorem \ref{maintheorem} is the main result of the paper.
\begin{theorem}\label{maintheorem}
    \(sat^*(n,\cV_3)\geq \frac{n}{2}\).
\end{theorem}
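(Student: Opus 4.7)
The plan is to work by cases on whether $\emptyset \in \mathcal{F}$, exploiting the fact that $\mathcal{V}_3$-freeness forces the strict upper shadow of every element of $\mathcal{F}$ to have width at most $2$. First I would verify two basic facts. $(i)$ $[n] \in \mathcal{F}$, since otherwise $[n]$ would need to play a role in the $\mathcal{V}_3$ created by its own addition: it cannot be the minimum (that requires three pairwise incomparable sets strictly above it), nor can it be a maximal element (this would need another set incomparable to $[n]$, but every set is comparable to $[n]$). $(ii)$ For every $F \in \mathcal{F}$, the set $U(F) = \{G \in \mathcal{F} : G \supsetneq F\}$ has no three pairwise incomparable elements (otherwise together with $F$ they form an induced $\mathcal{V}_3$), so by Dilworth $U(F)$ is the union of at most two chains.

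\textbf{Case 1: $\emptyset \in \mathcal{F}$.} Applying $(ii)$ with $F = \emptyset$, $\mathcal{F} \setminus \{\emptyset\}$ has width at most $2$ and decomposes as $C_1 \cup C_2$. A crucial consequence is that no $S \notin \mathcal{F}$ can serve as the minimum of the created $\mathcal{V}_3$, since this would demand three pairwise incomparable supersets of $S$ in $\mathcal{F}$. Hence every added $S$ acts as a maximal element, producing $A, B_1, B_2 \in \mathcal{F}$ with $A \subsetneq S, B_1, B_2$ and $\{S, B_1, B_2\}$ pairwise incomparable, with $B_1 \in C_1 \setminus C_2$ and $B_2 \in C_2 \setminus C_1$ (up to swap). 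I would then apply this to $S = \{i\}$ and $S = [n] \setminus \{i\}$ for each $i \in [n]$: either $S \in \mathcal{F}$ directly, or the forced witnesses $B_1, B_2$ give two incomparable elements in $\mathcal{F}$ specifically linked to $i$ (for example, for $S = [n] \setminus \{i\}$ the witnesses $B_1, B_2$ both contain $i$, are proper subsets of $[n]$, and lie in different chains). The aim is a charging $i \mapsto F_i \in \mathcal{F}$ in which each $F \in \mathcal{F}$ is the image of at most two elements of $[n]$, yielding $|\mathcal{F}| \geq n/2$.

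\textbf{Case 2: $\emptyset \notin \mathcal{F}$.} Adding $\emptyset$ must produce an induced $\mathcal{V}_3$; since $\emptyset$ has no strict subset it must be the minimum of this $\mathcal{V}_3$, so $\mathcal{F}$ contains three pairwise incomparable sets $M_1, M_2, M_3$. One then runs an analogous saturation analysis, now localized above the $M_j$ (each $U(M_j)$ still has width at most $2$); the triple $M_1, M_2, M_3$ supplies the extra incomparable witnesses that make the charging scheme work in the absence of $\emptyset$.

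\textbf{Main obstacle.} The principal difficulty is verifying that the charging has multiplicity at most $2$. While $(ii)$ says every $F \in \mathcal{F}$ has at most two immediate successors in $\mathcal{F}$, which heuristically explains the factor $2$, the natural bijection between elements of $[n]$ and Hasse-diagram edges of $\mathcal{F}$ breaks down because cover relations $F \lessdot G$ in $\mathcal{F}$ can have $|G \setminus F|$ strictly greater than $1$ (indeed, this can occur in genuine $\mathcal{V}_3$-saturated families). One must therefore use the two-chain decomposition from Dilworth in tandem with saturation applied to carefully chosen sets (the singletons $\{i\}$, the co-singletons $[n] \setminus \{i\}$, and intermediate sets straddling large covers) to localize witnesses to specific chain positions and control the multiplicity of the charging. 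This coordination between the global chain structure and the local saturation constraints is the heart of the argument.
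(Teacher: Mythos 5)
Your proposal is an outline rather than a proof: the step you yourself flag as the ``heart of the argument'' --- showing that the charging $i \mapsto F_i$ has multiplicity at most $2$ --- is never carried out, and it does not follow from what you have established. Your preliminary observations are fine ($[n]\in\cF$; every up-set $U(F)$ has width at most $2$, so in Case 1 the family is $\{\emptyset\}$ plus two chains, and any added $S$ must be a maximal element of its new copy of $\cV_3$). But the witnesses you extract are not localized: for $S=[n]\setminus\{i\}$ the condition on $B_1,B_2$ is only that they contain $i$, are not $[n]$, and omit some other element, so a single incomparable pair $B_1,B_2$ with large intersection can serve as the witness pair for every $i\in B_1\cap B_2$; similarly for $S=\{i\}$ the witnesses merely avoid $i$. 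Nothing in the two-chain decomposition by itself prevents this collapse, so no bound of the form $|\cF|\geq n/2$ follows from the material actually written. Moreover Case 2 ($\emptyset\notin\cF$), which is the generic case and the one where the width-$2$ structure disappears, is dispatched in one sentence (``runs an analogous saturation analysis, localized above the $M_j$'') with no argument at all; it is not analogous, since above a single $M_j$ you again only control width, not a charging.

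The obstacle you correctly identify --- that cover relations $Y\subsetneq X$ in $\cF$ can have $|X\setminus Y|>1$, breaking any naive correspondence between ground-set elements and edges of the Hasse diagram --- is precisely what the paper's argument is built to overcome, and it does so by a different mechanism than singletons/co-singletons. The paper shows (Lemma \ref{lemmatwo}, Proposition \ref{propositionthree}, Lemma \ref{bockandcalls}) that any non-minimal $X\in\cF$ admits a cover $Y\in\cF$ such that \emph{every} $i\in X\setminus Y$ has a witness $Z_i\in\cF$ with $Z_i\setminus Y=\{i\}$; the proof inserts $Y\cup\{i\}$, analyses the forced configuration $\Theta_{X,Y}$, and uses a maximality argument on the sets $C_Y$ to rule out the bad case. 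Iterating this from $[n]$ down to an inclusion-minimal $S$ produces $n-|S|$ distinct witnesses, so $|S|\geq n-|\cF|$, while a short separate argument (inserting $S\setminus\{i\}$) gives $|S|\leq|\cF|$, and combining yields $|\cF|\geq n/2$. If you want to salvage your route, you would need a replacement for that localization step --- some argument pinning each $i$ to a bounded number of members of $\cF$ --- and at present your proposal contains no candidate for it.
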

By symmetry, we also have Corollary \ref{corollarytwo}:
\begin{corollary}\label{corollarytwo}
    \(sat^*(n,\Lambda_3) \geq \frac{n}{2}\).
\end{corollary}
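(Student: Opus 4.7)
The plan is to deduce Corollary \ref{corollarytwo} from Theorem \ref{maintheorem} via the standard complementation duality on $\cP([n])$, exploiting that $\Lambda_3$ is precisely the poset-theoretic dual of $\cV_3$ (obtained by reversing all order relations).

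First, I would define the complementation map $\phi\colon \cP([n])\to \cP([n])$ by $\phi(S) = [n]\setminus S$. This is a bijection that is order-reversing: $S\subseteq T$ if and only if $\phi(T)\subseteq \phi(S)$. For any family $\cF\subseteq \cP([n])$, set $\cF^c = \{\phi(S) : S\in \cF\}$, which satisfies $|\cF^c|=|\cF|$.

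Next I would verify the key containment equivalence: $\cF$ contains an induced copy of $\Lambda_3$ if and only if $\cF^c$ contains an induced copy of $\cV_3$. Given an embedding $f\colon \Lambda_3\hookrightarrow \cF$, composing with $\phi$ gives a map into $\cF^c$; because $\phi$ reverses order, the image realises the dual of $\Lambda_3$, which is exactly $\cV_3$. The reverse direction is identical, using that $\phi$ is an involution. Using this, I would show that $\cF$ is $\Lambda_3$-saturated if and only if $\cF^c$ is $\cV_3$-saturated. The $\Lambda_3$-free property transfers to $\cV_3$-freeness of $\cF^c$ by the above. For the saturation condition, note that the sets not in $\cF$ are precisely the $\phi$-preimages of sets not in $\cF^c$; so $\cF\cup\{T\}$ contains an induced $\Lambda_3$ for every $T\in \cP([n])\setminus \cF$ if and only if $\cF^c\cup\{\phi(T)\}$ contains an induced $\cV_3$ for every such $T$, which is the $\cV_3$-saturation condition for $\cF^c$.

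Consequently, the complementation map sets up a size-preserving bijection between $\Lambda_3$-saturated families and $\cV_3$-saturated families in $\cP([n])$, so $sat^*(n,\Lambda_3) = sat^*(n,\cV_3)$. Applying Theorem \ref{maintheorem} then yields $sat^*(n,\Lambda_3)\geq \tfrac{n}{2}$. There is no real obstacle here; the only point one has to be slightly careful about is to phrase the saturation condition in terms of extensions by arbitrary sets outside $\cF$, and to use that $\phi$ is a bijection on $\cP([n])$ so that ranging over $T\notin \cF$ is the same as ranging over $\phi(T)\notin \cF^c$.
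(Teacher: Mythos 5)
Your proof is correct and is exactly the argument the paper has in mind: the paper justifies this corollary by the one-word appeal ``by symmetry,'' meaning precisely the order-reversing complementation duality $S\mapsto [n]\setminus S$ that you spell out. Your write-up simply makes that symmetry explicit, so there is nothing further to add.
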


Furthermore the Theorem \ref{maintheorem} and Corollary \ref{corollarytwo}, combined with Proposition 5 in \cite{gluing}, can be used to show that a large class of posets have saturation number that is at least linear.
\begin{proposition}[Proposition 5 in \cite{gluing}]
Let $\mathcal P_1$ and $\mathcal P_2$ be any non-empty posets such that $\mathcal P_1$ does not have a unique maximal element and $\mathcal P_2$ does not have a unique minimal element. Then $sat^*(n,\mathcal P_2*\mathcal P_1)\geq\max\{sat^*(n,\mathcal P_2*\bullet), sat^*(n,\bullet*\mathcal P_1)\}$, where $\bullet$ represents the poset with one element.
\end{proposition}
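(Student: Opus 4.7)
The plan is to prove $sat^*(n,\mathcal P_2*\mathcal P_1)\geq sat^*(n,\mathcal P_2*\bullet)$ using the hypothesis that $\mathcal P_1$ has no unique maximum; the dual inequality $sat^*(n,\mathcal P_2*\mathcal P_1)\geq sat^*(n,\bullet*\mathcal P_1)$ follows by applying the same argument after passing to complements in $\cP([n])$, which reverses set containment, turns $\mathcal P_2*\mathcal P_1$ into $\mathcal P_1^{\mathrm{op}}*\mathcal P_2^{\mathrm{op}}$, and swaps the roles of the two hypotheses so that $\mathcal P_2$'s lack of unique minimum is what is used.

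Let $\cF\subseteq\cP([n])$ be $\mathcal P_2*\mathcal P_1$-saturated. I aim to show that $\cF$ is itself $\mathcal P_2*\bullet$-saturated, which yields $|\cF|\geq sat^*(n,\mathcal P_2*\bullet)$ immediately. One half of saturation — that adding any $S\notin\cF$ creates an induced $\mathcal P_2*\bullet$ — is essentially automatic: $\mathcal P_2*\bullet$ embeds as an induced sub-poset of $\mathcal P_2*\mathcal P_1$ by sending $\bullet$ to any maximal element of the $\mathcal P_1$-part (the embedding is induced because that max has no order relations with the $\mathcal P_2$-part beyond being strictly below it). Hence any induced $\mathcal P_2*\mathcal P_1$-copy restricts to an induced $\mathcal P_2*\bullet$-copy, and applied to the copy guaranteed in $\cF\cup\{S\}$ this gives the needed $\mathcal P_2*\bullet$-copy involving $S$, after checking by cases which role $S$ plays.

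The substantive step is showing $\cF$ is $\mathcal P_2*\bullet$-free. Suppose toward contradiction that $\cF$ contains an induced $\mathcal P_2*\bullet$: some $S\in\cF$ sitting strictly below an induced $\mathcal P_2$-copy $\mathcal Q\subseteq\cF$, with no additional order relations. Using that $\mathcal P_1$ has two incomparable maximal elements $m_1,m_2$, my goal is to extend $(S,\mathcal Q)$ to a full induced $\mathcal P_2*\mathcal P_1$-copy inside $\cF$, contradicting $\mathcal P_2*\mathcal P_1$-freeness. Viewing $S$ as the image of $m_1$, I would realize the remaining elements of $\mathcal P_1$ iteratively: for each missing $p\in\mathcal P_1$, pick a candidate set $T_p\in\cP([n])$ with the correct order profile relative to $S$, $\mathcal Q$, and previously committed sets; if $T_p\in\cF$ commit it, otherwise invoke saturation on $T_p$ to force an induced $\mathcal P_2*\mathcal P_1$-copy in $\cF\cup\{T_p\}$, then perform an exchange to transfer the committed structure $(S,\mathcal Q, \text{committed } T_{p'})$ into that new copy.

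The main obstacle is the exchange step. The saturation-forced copy for $T_p$ may be anchored at sets entirely disjoint from $(S,\mathcal Q)$, and showing that committed sets can be substituted in without creating extra comparabilities (which would break the induced condition) requires a careful case analysis based on how each committed set sits relative to the new forced copy. I expect the bulk of the technical work will lie here; in particular, avoiding collisions inside the $\mathcal P_2$-part may require modifying the choice of $T_p$ or splitting into cases depending on whether $\mathcal Q$ and the new copy's $\mathcal P_2$-part agree, partially overlap, or are disjoint, with the "two max elements" hypothesis providing the flexibility to resolve each case.
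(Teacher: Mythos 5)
Your reduction hinges on the claim that a $\mathcal P_2*\mathcal P_1$-saturated family $\cF$ is itself $\mathcal P_2*\bullet$-free (and hence $\mathcal P_2*\bullet$-saturated). That claim is false, and with it the whole strategy. Take $\mathcal P_1=\mathcal P_2=\mathcal A_2$, so that $\mathcal P_2*\mathcal P_1$ is the butterfly $\mathcal B$ and $\mathcal P_2*\bullet$ is $\mathcal V_2$ (one element below two incomparable ones). Let $\cF$ be any $\mathcal B$-saturated family in $\cP([n])$ with $n\geq 2$. Every element of $\mathcal B$ is incomparable to some other element of $\mathcal B$, while $\emptyset$ is comparable to every set, so no induced copy of $\mathcal B$ can use $\emptyset$; hence $\emptyset\in\cF$. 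Moreover $\cF$ cannot be a chain: an induced copy of $\mathcal B$ contains two disjoint incomparable pairs, whereas in a chain plus one new set every incomparable pair goes through the new set, so a chain (which is a proper subfamily of $\cP([n])$ when $n\geq 2$) is never $\mathcal B$-saturated. Thus $\cF$ contains two sets $A\|B$, and then $\{\emptyset,A,B\}$ is an induced copy of $\mathcal A_2*\bullet=\mathcal V_2$ inside $\cF$. So your ``substantive step'' is an attempt to prove a false statement, and no amount of care with the proposed exchange argument (which you in any case leave unresolved) can rescue it.

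Note also that the half you do obtain correctly --- that for every $S\notin\cF$ the family $\cF\cup\{S\}$ contains an induced $\mathcal P_2*\bullet$ --- carries no lower bound on $|\cF|$ by itself: once freeness is abandoned, a single fixed copy of $\mathcal P_2*\bullet$, a family of constant size, already has the property that adding any set leaves a copy present. So proving the proposition requires a genuinely different mechanism, either producing from $\cF$ a $\mathcal P_2*\bullet$-saturated family of size at most $|\cF|$, or counting many distinct elements of $\cF$ directly (as this paper does for $\mathcal V_3$). Be aware, too, that the present paper does not prove this proposition at all; it is imported from \cite{gluing}, so there is no in-paper argument to compare against --- but as written your proposal does not establish the statement. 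Your complementation remark for the second inequality is fine in principle (it uses $sat^*(n,\mathcal Q)=sat^*(n,\mathcal Q^{\mathrm{op}})$), but it inherits the same fatal gap.
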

By a straightforward application of the above proposition, we find the following.
\begin{corollary}\label{pepepopocorolary}
Let $\mathcal P$ be any non-empty poset that does not have a unique maximal element. Then $sat^*(n,\mathcal A_3*\mathcal P) \geq sat^*(n,\mathcal{V}_3)\geq\frac{n}{2}$, where $\mathcal A_3$ is the antichain of size 3.

Likewise, if  $\mathcal P$ is a non-empty poset that does not have a unique minimal element then \break${sat^*(n,\mathcal P*\mathcal{A}_3) } \geq sat^*(n,\Lambda_3)\geq\frac{n}{2}$.
\end{corollary}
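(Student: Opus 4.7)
The plan is to apply Proposition 5 of \cite{gluing} directly, using Theorem \ref{maintheorem} and Corollary \ref{corollarytwo} as the base cases. For the first inequality, I would take $\mathcal P_2 = \mathcal A_3$ and $\mathcal P_1 = \mathcal P$. Both hypotheses of the proposition are immediate: the antichain $\mathcal A_3$ has three pairwise incomparable minimal elements and so in particular no unique minimum, and $\mathcal P$ lacks a unique maximal element by assumption. The proposition then yields
\[
sat^*(n, \mathcal A_3 * \mathcal P) \;\geq\; \max\{sat^*(n, \mathcal A_3 * \bullet),\, sat^*(n, \bullet * \mathcal P)\} \;\geq\; sat^*(n, \mathcal A_3 * \bullet).
\]

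The next step is to identify $\mathcal A_3 * \bullet$ with a familiar poset. Under the paper's convention that in $\mathcal Q * \mathcal R$ the poset $\mathcal Q$ sits entirely on top of $\mathcal R$, the sum $\mathcal A_3 * \bullet$ is the four element poset consisting of a single element lying below three pairwise incomparable elements, which is exactly $\mathcal V_3$. Substituting and invoking Theorem \ref{maintheorem} gives $sat^*(n, \mathcal A_3 * \mathcal P) \geq sat^*(n, \mathcal V_3) \geq n/2$, which is the first claim.

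The second claim follows by the symmetric argument: take $\mathcal P_2 = \mathcal P$ and $\mathcal P_1 = \mathcal A_3$, so that $\mathcal P$ has no unique minimum by hypothesis and $\mathcal A_3$ has no unique maximum. Proposition 5 then gives $sat^*(n, \mathcal P * \mathcal A_3) \geq sat^*(n, \bullet * \mathcal A_3)$, and $\bullet * \mathcal A_3$ is precisely $\Lambda_3$, so Corollary \ref{corollarytwo} completes the bound. There is essentially no obstacle in this proof: beyond quoting Proposition 5, the only content is the bookkeeping identification of $\mathcal A_3 * \bullet$ and $\bullet * \mathcal A_3$ as $\mathcal V_3$ and $\Lambda_3$ respectively, together with verifying that $\mathcal A_3$ has neither a unique maximum nor a unique minimum so that the hypotheses of Proposition 5 are satisfied in both directions.
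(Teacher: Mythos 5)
Your proof is correct and is exactly the ``straightforward application'' the paper intends: you instantiate Proposition 5 with the right roles for $\mathcal A_3$ and $\mathcal P$ under the paper's convention that $\mathcal Q*\mathcal R$ places $\mathcal Q$ on top, verify the two non-uniqueness hypotheses, and identify $\mathcal A_3*\bullet$ with $\mathcal V_3$ and $\bullet*\mathcal A_3$ with $\Lambda_3$ before invoking Theorem \ref{maintheorem} and Corollary \ref{corollarytwo}. No issues.
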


\section{Preliminary definitions}
In this section, we introduce some definitions that will be used in the proof of Theorem \ref{maintheorem}.
\begin{definition}
    Let \(\cF\subseteq \cP([n])\) and suppose $X,Y\in \cF$. We say that $X$ covers $Y$ in \(\cF\) if \(X\supsetneq Y\) and for any $Z\in \cF$, \(Y\subseteq Z\subseteq X\) implies that $Z = Y$ or $X$. When $\cF$ is implicit we may simply say that $X$ covers $Y$.
\end{definition}

\begin{definition} Let \(X,Y\in \cP([n])\) be such that $X\supsetneq Y$. Define \(\Theta_{X,Y}\) to be the set of triples $(A,B,C)$ of elements of $\cP([n])$ such that \(A\), \(B\), \(C\), \(X\) and \(Y\) induce the following Hasse diagram:
\begin{figure}[h]
\hspace{0.1cm}\\
\centering
\begin{tikzpicture}
\node[label=above:$X$] (X) at (6,1) {$\bullet$}; 
\node[label=right:$B$] (B) at (7,0) {$\bullet$};
\node[label=above:$C$] (C) at (4,1) {$\bullet$};
\node[label=below:$A$] (A) at (6,-1) {$\bullet$};
\node[label=left:$Y$] (Y) at (5,0) {$\bullet$};
\draw (X)  -- (Y);
\draw (B)-- (A);
\draw (C) -- (Y) -- (A);
\draw (B) -- (X);
\end{tikzpicture}
\end{figure}
\FloatBarrier
\end{definition}

\section{The proof of Theorem \ref{maintheorem}}

Let \(\cF\) be a \(\cV_3\)-saturated family. We will prove that $|\cF|\geq \frac{n}{2}$.

The first goal in this proof is to show that there exists an inclusion minimal element $S\in\cF$ such that $|S|\geq n - |\cF|$. We do this by constructing a chain \([n] = X_1\supsetneq X_2\supsetneq \dots\supsetneq X_k\) of elements of $\cF$ such that for each $i\in [n]\backslash X_k$, there exists a $Z_i\in\cF$ which is distinct because $Z_i\backslash X_\alpha = \{i\}$ for some $\alpha$. After this, we prove that every inclusion-minimal element of $\cF$ has size at most $|\cF|$. Combining these two facts gives the desired result.

\begin{lemma}\label{lemmatwo}

Suppose $X,Y\in \cF$ and $X$ covers $Y$. If \(i\in X\backslash Y\) then either there exists an element $Z_i\in \cF$ such that $Z_i\backslash Y = \{i\}$ or there are elements $A,B,C\in \cF$ such that \((A,B,C)\in \Theta_{X,Y}\).

\end{lemma}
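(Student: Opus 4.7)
The plan is to consider $T := Y \cup \{i\}$ and analyse the attempt to add $T$ to $\cF$. If $T \in \cF$, then since $Y \subsetneq T \subseteq X$ and $X$ covers $Y$, necessarily $T = X$, and so $Z_i := X$ satisfies $Z_i \setminus Y = \{i\}$. Otherwise $T \notin \cF$, and by $\cV_3$-saturation $\cF \cup \{T\}$ contains an induced $\cV_3$ which must involve $T$.

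Since $\cV_3$ has one minimum and three maxima, $T$ plays one of these roles. If $T$ is the minimum, then the three corresponding elements of $\cF$ are pairwise incomparable and each strictly above $T \supsetneq Y$, so adjoining $Y$ produces an induced $\cV_3$ in $\cF$, contradicting $\cV_3$-freeness. Hence $T$ is a maximum, giving $P, Q, R \in \cF$ with $P \subsetneq T$, $P \subsetneq Q$, $P \subsetneq R$, and $T, Q, R$ pairwise incomparable. Since $P \subseteq T = Y \cup \{i\}$, if $i \in P$ then $P \setminus Y = \{i\}$ and $Z_i := P$ works; so I may assume $P \subseteq Y$. To rule out $P = Y$, note that then $Q, R \supsetneq Y$ and, for each of $Q$ and $R$, the only possibility compatible with $X$ covering $Y$ (which forbids $Y \subsetneq Q \subsetneq X$), with $T \subsetneq X$ (which forbids $X \subsetneq Q$ via $T, Q$ incomparable), and with $X \neq Q$ (again via $T \subsetneq X$) is incomparability with $X$; then $\{Y, X, Q, R\}$ is an induced $\cV_3$ in $\cF$, a contradiction. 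Thus $P \subsetneq Y$.

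To finish, I construct $(A, B, C) \in \Theta_{X,Y}$. Each of $Q, R$ is either strictly above $Y$ or incomparable with $Y$ (containment $\subseteq Y$ would force $\subsetneq T$, contradicting incomparability with $T$). If both are above $Y$, the same cover-plus-saturation argument yields an induced $\cV_3$ in $\cF$ on $\{Y, X, Q, R\}$; if both are incomparable with $Y$, then $\{P, Y, Q, R\}$ is itself an induced $\cV_3$ in $\cF$. So, WLOG, $Q \supsetneq Y$ and $R$ is incomparable with $Y$. Repeating the earlier reasoning forces $X$ and $Q$ to be incomparable, and then $\{P, X, Q, R\}$ would be an induced $\cV_3$ in $\cF$ if $X, R$ were incomparable, while $X \subsetneq R$ is blocked by $T \subsetneq X$ together with $T, R$ incomparable; hence $R \subsetneq X$. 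Setting $(A, B, C) := (P, R, Q)$ then realises the required configuration in $\Theta_{X,Y}$. The main obstacle is the systematic combined use of the cover hypothesis and $\cV_3$-freeness to eliminate, subcase by subcase, every way in which $Q$ and $R$ could fail to play the roles of $C$ and $B$.
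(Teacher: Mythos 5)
Your proof is correct and follows essentially the same strategy as the paper: adjoin $Y\cup\{i\}$, use saturation to get a copy of $\cV_3$ in which $Y\cup\{i\}$ must be a maximal element, and then combine the cover hypothesis with $\cV_3$-freeness to either extract $Z_i$ or force the remaining three sets into the $\Theta_{X,Y}$ configuration. The only difference is organizational: you classify $Q,R$ by their relation to $Y$, whereas the paper first applies $\cV_3$-freeness to $\{X,A,B,C\}$ to decide which maximal element lies below $X$; both case analyses close correctly.
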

\begin{proof}We may assume that $|X\backslash Y|\geq 2$ as if $|X\backslash Y| = 1$ and $i\in X\backslash Y$ then $X\backslash Y = \{i\}$. Setting $X = Z_i$ would complete the proof of the lemma in this case.

Let \(i\in X\backslash Y\). Since \(|X\backslash Y|\geq 2\), we have $Y\subsetneq Y\cup \{i\}\subsetneq X$. This means that $Y\cup \{i\}\not\in \cF$ as $X$ covers $Y$. Hence \(\cF\cup \{Y\cup \{i\}\}\) contains a copy of $\cV_3$ and furthermore $Y\cup \{i\}$ is an element of any such $\cV_3$. Let \(A,B,C\in \cF\) be elements such that $Y\cup \{i\},A,B,C$ forms a copy of $\cV_3$. As in the following diagram, we cannot have $Y\cup \{i\}$ being the minimal element of the $\cV_3$ as then $Y, A,B,C$ would form a copy of $\cV_3$ in $\cF$, a contradiction.
\begin{figure}[h]
\hspace{0.1cm}\\
\centering
\begin{tikzpicture}[
    node distance=1.5cm and 2cm,   
  ]

  \node[label=right:$A$]               (u1)    {$\bullet$};
  \node[right=of u1,label=right:$B$]  (u2)    {$\bullet$};
  \node[right=of u2,label=right:$C$] (ukm1) {\(\bullet\)};

  \node[below=of u2,label=right:$Y\cup\{i\}$]   (x)     {$\bullet$};

  \foreach \v in {u1,u2,ukm1}{
    \draw (x) -- (\v);
  }
  \node[below=of x,label=right:$Y$]   (y)     {$\bullet$};
\draw (x) -- (y);
\end{tikzpicture}
\end{figure}
\FloatBarrier
Hence \(Y\cup \{i\}\) is one of the elements in the top layer of the poset. Without loss of generality, let $A$ be the unique minimal element of the $\cV_3$. 

Since \(\cF\) is \(\cV_3\)-free, $X,A,B,C$ cannot be a copy of $\cV_3$. We know that $A\subsetneq B,C,X$ and $B\|C$, so either $X\not\|B$ or $X\not\|C$. Without loss of generality, suppose $X\not\|B$. If \(X\subseteq B\) then $Y\cup \{i\}\subseteq X\subseteq B$ which is a contradiction as $Y\cup \{i\}$ and $B$ are incomparable. This means that $B\subsetneq X$. Furthermore, if $B\supsetneq Y$ then we would have $X\supsetneq B\supsetneq Y$ which contradicts the fact that $X$ covers $Y$. Also, if $B\subseteq Y$ then $Y\cup\{i\}\supsetneq Y\supseteq B$ which contradicts $Y\cup\{i\}\| B$. Therefore we must have $B\| Y$. This is displayed in the following diagram, where the dashed lines represent relations that we don't currently know.
\begin{figure}[h]
\hspace{0.1cm}\\
\centering
\begin{tikzpicture}
\node[label=above:$X$] (top) at (6,1.5) {$\bullet$}; 
\node[label=left:$Y\cup \{i\}$] (left) at (6,0) {$\bullet$};
\node[label=above:$B$] (right) at (8,0) {$\bullet$};
\node[label=above:$C$] (right2) at (10,0) {$\bullet$};
\node[label=below:$A$] (right3) at (8,-1.5) {$\bullet$};
\node[label=below:$Y$] (bottom) at (6,-1.5) {$\bullet$};
\draw (top) -- (left) -- (bottom);
\draw (right)-- (right3) -- (right2);
\draw (left) -- (right3);
\draw (right) -- (top);
\draw[ dashed] (top) -- (right2);
\draw[dashed] (bottom) -- (right3);
\draw[ dashed] (bottom) -- (right2);
\end{tikzpicture}
\end{figure}
\FloatBarrier
Since $A\subseteq Y\cup \{i\}$, if \(A\) and $Y$ were incomparable we would have $i\in A$ and $A\backslash Y = \{i\}$. Hence in this case, we would be done by setting $Z_i = A$. As such, we may assume that either $A\subsetneq Y$ or $Y\subseteq A$. However, if $Y\subseteq A$ then we would have $Y\subsetneq B\subsetneq X$ so $X$ would not cover $Y$. This is a contradiction so we have $A\subsetneq Y$. 
\begin{figure}[h]
\hspace{0.1cm}\\
\centering
\begin{tikzpicture}
\node[label=above:$X$] (X) at (6,1.5) {$\bullet$}; 
\node[label=left:$Y\cup \{i\}$] (left) at (6,0) {$\bullet$};
\node[label=above:$B$] (B) at (8,0) {$\bullet$};
\node[label=above:$C$] (C) at (10,0) {$\bullet$};
\node[label=below:$A$] (A) at (8,-3) {$\bullet$};
\node[label=below:$Y$] (Y) at (6,-1.5) {$\bullet$};
\draw (X) -- (left) -- (Y);
\draw (B)-- (A) -- (C);
\draw (Y) -- (A);
\draw (B) -- (X);
\draw[dashed] (Y)-- (C)-- (X);
\end{tikzpicture}
\end{figure}
\FloatBarrier
Finally, consider the relationship between $C$ and $Y$. Clearly, we cannot have $C\subseteq Y$ as $C$ and $Y\cup \{i\}$ are incomparable. Furthermore, if \(C\|Y\) then $Y,A,B,C$ would form a copy of $\cV_3$ in $\cF$, a contradiction. Therefore, we must have $Y\subsetneq C$. We cannot have $C\supseteq X$ as otherwise $C$ would be greater than $Y\cup \{i\}$. Furthermore, we do not have $C\subsetneq X$ as then $X$ would not cover $Y$. This means that $C\|X$, so we have the following Hasse diagram:

\begin{figure}[h]
\hspace{0.1cm}\\
\centering
\begin{tikzpicture}
\node[label=above:$X$] (X) at (6,1) {$\bullet$}; 
\node[label=right:$B$] (B) at (7,0) {$\bullet$};
\node[label=above:$C$] (C) at (4,1) {$\bullet$};
\node[label=below:$A$] (A) at (6,-1) {$\bullet$};
\node[label=left:$Y$] (Y) at (5,0) {$\bullet$};
\draw (X)  -- (Y);
\draw (B)-- (A);
\draw (C) -- (Y) -- (A);
\draw (B) -- (X);
\end{tikzpicture}
\end{figure}
\FloatBarrier
\end{proof}
\begin{proposition}\label{propositionthree}
    Suppose $X\neq Y\in \cF$ and $X$ covers $Y$. If there are elements $A,B,C\in \cF$ such that $(A,B,C)\in \Theta_{X,Y}$ then there exists an element $Y_2\in \cF$ such that $B\subseteq Y_2\subsetneq X$, $(A,Y_2,C)\in \Theta_{X,Y}$ and $X$ covers $Y_2$.
\end{proposition}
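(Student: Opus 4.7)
The plan is to take $Y_2$ to be a maximal element of $\cF$ (with respect to $\subseteq$) satisfying $B \subseteq Y_2 \subsetneq X$. Such a $Y_2$ exists since $B$ itself qualifies, and $\cF$ is finite. By the maximality of $Y_2$, no element of $\cF$ lies strictly between $Y_2$ and $X$, so $X$ covers $Y_2$ in $\cF$, which immediately takes care of one of the three conclusions. The remaining two conclusions together amount to $(A, Y_2, C) \in \Theta_{X,Y}$, which I would verify by confirming each relation in the defining Hasse diagram on $\{A, Y_2, C, X, Y\}$.

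The bulk of the verification consists of a few short checks. First, $A \subsetneq Y_2 \subsetneq X$ is immediate, since $A \subsetneq B \subseteq Y_2$ from $(A,B,C)\in\Theta_{X,Y}$ and $Y_2 \subsetneq X$ holds by construction. Next, $Y_2 \| Y$: if $Y_2 \subseteq Y$ then $B \subseteq Y$, contradicting $B \| Y$ in the original $\Theta_{X,Y}$; and if $Y \subseteq Y_2$ then, since $Y = Y_2$ would again force $B \subseteq Y$, we would have $Y \subsetneq Y_2 \subsetneq X$, contradicting that $X$ covers $Y$ in $\cF$. Similarly, $Y_2 \| C$: if $Y_2 \subseteq C$ then $B \subseteq C$, contradicting $B \| C$; and if $C \subseteq Y_2$ then $C \subsetneq X$, contradicting $C \| X$. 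Given the containments $A \subsetneq Y_2 \subsetneq X$, $A \subsetneq Y \subsetneq X$, $Y \subsetneq C$ and the incomparabilities $Y_2 \| Y$, $Y_2 \| C$, $X \| C$, the Hasse diagram of the induced sub-poset on $\{A, Y_2, C, X, Y\}$ is forced to coincide with the one in the definition of $\Theta_{X,Y}$, and in particular $Y_2$ covers $A$ in that sub-poset.

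There is no serious obstacle: the lemma is essentially a packaging of the idea ``promote $B$ upward to a witness that is actually covered by $X$ in $\cF$.'' The only subtle points to flag are that the hypothesis ``$X$ covers $Y$'' (rather than just $Y \subsetneq X$) is what rules out $Y \subsetneq Y_2$, and the hypothesis $C \| X$ (which is part of the definition of $\Theta_{X,Y}$) is what rules out $C \subseteq Y_2$. Once these are pinned down, the verification is entirely routine.
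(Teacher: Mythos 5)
Your proof is correct and takes essentially the same approach as the paper: promote $B$ to an element $Y_2$ with $B\subseteq Y_2\subsetneq X$ that is covered by $X$, then verify the relations of $Y_2$ to $A$, $Y$, $C$ and $X$ exactly as you do. Your choice of $Y_2$ as a maximal element of $\cF$ satisfying $B\subseteq Y_2\subsetneq X$ is a marginally cleaner way to secure existence than the paper's case split on whether $X$ already covers $B$, but the verification is identical.
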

\begin{proof}
If \(X\) covers $B$, then we would be done by taking $B = Y_2$, so suppose that this is not the case. Let $Y_2\in \cF$ be an element such that $B\subsetneq Y_2\subsetneq X$ such that $X$ covers $Y_2$. We will prove that $(A,Y_2,C)\in \Theta_{X,Y}$. 

By construction, we have $Y_2\subsetneq X$. Since $A\subsetneq B$ and $B\subseteq Y_2$ we have $A\subsetneq Y_2$. We have $Y_2\not\subseteq C$ as otherwise we would have $B\subseteq Y_2\subseteq C$, violating the fact that $B\|C$. Likewise, if $Y_2\supseteq C$ then $X\supseteq Y_2\supseteq C$ which is a contradiction as $X\|C$. Therefore, we have $Y_2\|C$. $Y_2\not\subseteq Y$ because $Y_2\subseteq Y\Rightarrow B\subseteq Y$, which is a contradiction. Finally, if $Y_2\supsetneq Y$ then $X$ would not cover $Y$. This means that $Y\|Y_2$. Hence we have proven that \((A,Y_2,B)\in \Theta_{X,Y}\). 
\end{proof}
\begin{lemma}\label{bockandcalls}
    Let \(X\) be an element of $\cF$ that is not inclusion-minimal (i.e. $E\subsetneq X$ for some $E\in \cF$). There exists an element ${Y}\in \cF$ such that $X$ covers ${Y}$ and for every $i\in X\backslash {Y}$, there exists some $Z_i\in\cF$ such that $Z_i\backslash {Y} = \{i\}$.
\end{lemma}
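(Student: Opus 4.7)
The plan is to find the desired cover $Y$ by iterative refinement starting from an extremally chosen cover. Since $X$ is non-minimal, it has at least one cover in $\cF$. If some cover $Y$ satisfies $|X \setminus Y| = 1$, say $X \setminus Y = \{i\}$, then $Z_i = X$ is the unique required witness and the conclusion holds trivially; so we may assume $|X \setminus Y| \geq 2$ throughout.

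Take an arbitrary cover $Y$ of $X$. If every $i \in X \setminus Y$ admits a witness $Z_i \in \cF$ with $Z_i \setminus Y = \{i\}$, the lemma holds. Otherwise, Lemma \ref{lemmatwo} yields a bad index $i \in X \setminus Y$ together with a triple $(A, B, C) \in \Theta_{X,Y}$, and Proposition \ref{propositionthree} then upgrades $B$ to another cover $Y'$ of $X$ with $B \subseteq Y'$ and $(A, Y', C) \in \Theta_{X,Y}$; in particular $Y' \| Y$, so $Y' \neq Y$.

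The crucial observation powering the iteration is that $Y' \not\subseteq Y \cup \{i\}$. Indeed, were $Y' \subseteq Y \cup \{i\}$, then since $Y' \not\subseteq Y$ (from $Y' \| Y$) we would have $Y' \setminus Y = \{i\}$, making $Y'$ itself a witness for $i$ in $\cF$, contradicting the badness of $i$. Consequently $Y'$ must contain some element $j \in X \setminus Y$ with $j \neq i$; this is the ``new'' information we gain each time we apply Proposition \ref{propositionthree}.

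We now iterate, passing from $Y$ to $Y'$. The main obstacle, and the heart of the argument, is to ensure that this process terminates at a cover satisfying the lemma. The plan is to select the initial cover extremally---for instance, maximizing $|Y|$ (or a more refined lexicographic invariant built from the multiset $\{|Z \cap Y| : Z \in \cF, \, Z \supseteq Y \cup \{i\} \text{ for some bad } i\}$) among all covers of $X$ for which the lemma fails---and then argue by contradiction: Proposition \ref{propositionthree} combined with the key observation above must produce a cover $Y'$ violating the extremal property, which is impossible. Since the collection of covers of $X$ in $\cF$ is finite, this contradiction forces some cover to already satisfy the lemma. The delicate step is pinning down an extremal quantity robust enough to strictly increase at every non-terminal stage of the iteration.
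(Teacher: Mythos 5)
Your proposal has a genuine gap, and you name it yourself: ``the delicate step is pinning down an extremal quantity robust enough to strictly increase at every non-terminal stage of the iteration.'' That step is the entire content of the lemma, and nothing you write supplies it. Your candidate invariant, maximizing $|Y|$ over covers of $X$, does not work in any evident way: Proposition \ref{propositionthree} only produces a cover $Y'$ with $B\subseteq Y'\subsetneq X$ and $Y'\|Y$, and there is no reason why $|Y'|>|Y|$ or why any lexicographic refinement of the sort you gesture at should improve. Your ``crucial observation'' that $Y'\not\subseteq Y\cup\{i\}$ (hence $Y'$ meets $X\setminus Y$ in some $j\neq i$) is correct, but it is information relative to the old cover $Y$ and the old bad index $i$; when you move to $Y'$ the bad index changes, and the gained element $j$ gives no quantity that composes across iterations, so nothing prevents the process from cycling among covers of $X$ indefinitely. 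As written, the argument proves only that if some cover fails, another (different) cover exists, which is not enough.

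The paper avoids the iteration altogether. It assumes for contradiction that \emph{every} cover $Y$ of $X$ fails, and then makes a single extremal choice, not on $Y$ but on the third coordinate of the triple: for each cover $Y$ it fixes $(A_Y,B_Y,C_Y)\in\Theta_{X,Y}$ with $X$ covering $B_Y$ and $|C_Y|$ maximal, and then picks $Y$ maximizing $|C_Y|$ over all covers. Since $B_Y$ is itself a cover of $X$ that (by assumption) also fails, it has its own triple with third coordinate $C_{B_Y}\supsetneq B_Y$. One checks $C_{B_Y}\not\subseteq C_Y$ (else $B_Y\subseteq C_Y$, contradicting $B_Y\|C_Y$) and $C_Y\not\subsetneq C_{B_Y}$ (by maximality of $|C_Y|$), so $C_Y\|C_{B_Y}$; together with $C_Y\|X$, $C_{B_Y}\|X$ and $A_Y\subseteq X, C_Y, C_{B_Y}$, the sets $A_Y, X, C_Y, C_{B_Y}$ form an induced copy of $\cV_3$ in $\cF$, a contradiction. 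This direct construction of a forbidden $\cV_3$ is the missing idea in your proposal; to repair your approach you would essentially have to rediscover it, since tracking the covers themselves (rather than the sets $C_Y$ incomparable to $X$) does not yield a monotone invariant.
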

\begin{proof}
Let \(X\) be an element of $\cF$ that is not inclusion-minimal. Suppose that for any $Y\in \cF$ which is covered by $X$, there exists a singleton $i\in X\backslash Y$ such that $Z\backslash Y \neq \{i\}$ for all $Z\in \cF$. 

By Lemma \ref{lemmatwo} and Proposition \ref{propositionthree}, if $Y\in \cF$ is covered by $X$, there exist elements $A,B,C\in\cF$ such that $(A,B,C)\in \Theta_{X,Y}$ and $X$ covers $B$. For every $Y\in \cF$ covered by $X$, let \((A_Y,B_Y,C_Y)\) be a triple of elements of $\cF$ such that   $(A_Y,B_Y,C_Y)\in \Theta_{X,Y}$ and $X$ covers $B_Y$ with $|C_Y|$ maximal. 

Pick $Y$ to be an element of $ \cF$ which is covered by $X$ with $|C_Y| $ maximal. Since $X$ covers \(B_Y\), we can define elements $A_{B_Y},B_{B_Y},C_{B_Y}\in \cF$ in the same way as before. 

   \begin{figure}[h]
\hspace{0.1cm}\\
\centering
\begin{tikzpicture}
\node[label=above:$X$] (X) at (6,1) {$\bullet$}; 
\node[label=right:$B_Y$] (B) at (7,0) {$\bullet$};
\node[label=above:$C_Y$] (C) at (4,1) {$\bullet$};
\node[label=below:$A_Y$] (A) at (6,-1) {$\bullet$};
\node[label=left:$Y$] (Y) at (5,0) {$\bullet$};
\node[label=above:$C_{B_Y}$] (coc) at (8,1) {$\bullet$};
\draw (X)  -- (Y);
\draw (coc) -- (B);
\draw (B)-- (A);
\draw (C) -- (Y) -- (A);
\draw (B) -- (X);
\draw[dashed] (coc) -- (Y);
\end{tikzpicture}
\end{figure}
\FloatBarrier

By definition, since \((A_Y,B_Y,C_Y)\in \Theta_{X,Y}\) and \((A_{B_Y},B_{B_Y},C_{B_Y})\in \Theta_{X,B_Y}\), we have $X\|C_Y$ and $X\|C_{B_Y}$. Furthermore $C_{B_Y}\not\subseteq C_Y$ as if this were the case then we would have $B_Y\subseteq C_{B_Y}\subseteq C_Y$, a contradiction as $B_Y\|C_Y$. By maximality of $|C_Y|$, we cannot have $C_Y\subsetneq C_{B_Y}$. This means that $C_Y\|C_{B_Y}$. 

Since \((A_Y,B_Y,C_Y)\in\Theta_{X,Y}\) we have $A_Y\subseteq X,B_Y,C_Y$. Thus, since $C_{B_Y}\supset B_Y$, $C_{B_Y}\supset A_Y$. This means that $C_Y$, $X$, $C_{B_Y}$ and $A_Y$ form a copy of $\cV_3$ in \(\cF\), which is a contradiction. Therefore, there must exist an element ${Y}\in \cF$ such that $X$ covers ${Y}$ and for every $i\in X\backslash {Y}$, there exists an element $Z_i$ such that $Z_i\backslash {Y} = \{i\}$. 
\end{proof}

\begin{lemma}\label{ejfioewjife2222}
There is an inclusion-minimal element $S\in \cF$ such that $|S|\geq n - |\cF|$.
\end{lemma}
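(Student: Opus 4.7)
The plan is to follow the outline at the start of this section: build a strictly descending chain $[n] = X_1 \supsetneq X_2 \supsetneq \cdots \supsetneq X_k = S$ in $\cF$ terminating at an inclusion-minimal element $S$, so that for every $i \in [n]\backslash S$ there is a witness $Z_i \in \cF$ determined by where $i$ first leaves the chain. The idea is then to show that $S$ together with $\{Z_i : i \in [n]\backslash S\}$ gives $1 + (n-|S|)$ distinct elements of $\cF$, whence $|\cF| \geq n - |S| + 1$ and the claim follows.

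To get the chain started I would first note that $[n] \in \cF$: since $[n]$ is comparable to every element of $\cP([n])$, it cannot play any role in an induced $\cV_3$ (whose three maximal elements are pairwise incomparable and whose unique minimal element is strictly below all three), and so $\cF \cup \{[n]\}$ would remain $\cV_3$-free if $[n] \notin \cF$, contradicting saturation. Setting $X_1 = [n]$, as long as the current $X_j$ is not inclusion-minimal in $\cF$, Lemma \ref{bockandcalls} yields a set $X_{j+1} \in \cF$ covered by $X_j$ together with, for each $i \in X_j \backslash X_{j+1}$, a witness $Z_i^{(j)} \in \cF$ satisfying $Z_i^{(j)} \backslash X_{j+1} = \{i\}$. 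Since $\cF$ is finite this process terminates at an inclusion-minimal $S := X_k$. The sets $X_j \backslash X_{j+1}$ partition $[n]\backslash S$, so each $i \in [n]\backslash S$ has a unique index $\alpha(i) \in \{1,\dots,k-1\}$ with $i \in X_{\alpha(i)} \backslash X_{\alpha(i)+1}$, and we set $Z_i := Z_i^{(\alpha(i))}$.

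The main task will be to verify that the sets $Z_i$ are pairwise distinct and distinct from $S$, which is where the real content sits. For $i \neq j$ with $\alpha(i) = \alpha(j) = \alpha$ this is immediate, since $Z_i \backslash X_{\alpha+1} = \{i\} \neq \{j\} = Z_j \backslash X_{\alpha+1}$. Otherwise, without loss of generality $\alpha(i) < \alpha(j)$, so $X_{\alpha(j)+1} \subseteq X_{\alpha(i)+1}$; moreover $j \in X_{\alpha(j)} \subseteq X_{\alpha(i)+1}$, while $i \notin X_{\alpha(i)+1}$. If we had $Z_i = Z_j$, then the nesting would give $Z_i \backslash X_{\alpha(i)+1} \subseteq Z_j \backslash X_{\alpha(j)+1} = \{j\}$, yet $i \in Z_i \backslash X_{\alpha(i)+1}$ would force $i = j$, a contradiction. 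Finally $S = X_k \subseteq X_{\alpha(i)+1}$ gives $S \backslash X_{\alpha(i)+1} = \emptyset \neq \{i\}$, so $Z_i \neq S$. Counting $S$ together with the $n - |S|$ distinct witnesses $Z_i$ yields $|\cF| \geq n - |S| + 1$, so $|S| \geq n - |\cF|$, as required. The one subtle point is the distinctness argument above, which rests on carefully exploiting the nesting of the chain; everything else is a routine iterative application of Lemma \ref{bockandcalls}.
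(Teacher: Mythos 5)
Your proposal is correct and follows essentially the same route as the paper: establish $[n]\in\cF$, iterate Lemma \ref{bockandcalls} to build the descending chain down to an inclusion-minimal $S$, and use the nesting of the chain to show the witnesses $Z_i$ are pairwise distinct (your monotonicity-of-set-difference step is the same computation the paper performs via $Z_i\subseteq X_\alpha\cup\{i\}$). The only difference is cosmetic: you additionally note $Z_i\neq S$ to get the marginally stronger count $|\cF|\geq n-|S|+1$, which the paper does not need.
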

\begin{proof}
    Since \(\cV_3\) does not have a unique maximal element, no copy of $\cV_3$ in $\cP([n])$ can use $[n]$. This means that $[n] \in \cF$. We construct a sequence \(X_1, X_2, \dots\) of elements of \(\mathcal{F}\) inductively as follows. Set \(X_1 = [n]\). For \(\alpha \geq 2\), suppose \(X_{\alpha-1}\) has already been defined and is not inclusion-minimal in \(\mathcal{F}\). Then we choose \(X_\alpha \in \mathcal{F}\) such that \(X_\alpha\) is covered by \(X_{\alpha-1}\) in \(\mathcal{F}\) and for every \(i \in X_{\alpha-1} \setminus X_\alpha\), there exists \(Z_i \in \mathcal{F}\) satisfying \(Z_i \setminus X_\alpha = \{i\}\). Lemma~\ref{bockandcalls} guarantees that such an \(X_\alpha\) always exists. If at any stage \(X_{\alpha-1}\) is an inclusion-minimal element of \(\mathcal{F}\), we terminate the sequence at \(X_{\alpha-1}\).

    We obtain a sequence $[n] = X_1\supsetneq X_2\supsetneq \dots\supsetneq X_k$ of elements of $\cF$ for some $k$. If $i\in [n]\backslash X_k$ then $i\in X_{\alpha-1}\backslash X_\alpha$ for some $\alpha$. This means that there exists an element $Z_i\in \cF$ such that $Z_i\backslash X_\alpha = \{i\}$. We will prove that each such $Z_i$ is unique.

    Suppose $i\neq j\in [n]\backslash X_k$ are such that $Z_i = Z_j$. There are elements $\alpha,\beta \in [k]$ such that $Z_i\backslash X_\alpha = \{i\}$ and $Z_j\backslash X_\beta = \{j\}$. Since \(X_1,X_2,\dots,X_k\) forms a chain, we may assume without loss of generality that $X_\alpha \subseteq X_\beta$. We have that $Z_i\subseteq X_\alpha \cup \{i\}\subseteq X_\beta\cup\{i\}$. Hence \(Z_i\backslash X_\beta \subseteq \{i\}\), so $\{j\}\subseteq \{i\}$. This implies that $j = i$ which is a contradiction. Therefore $Z_i\neq Z_j$ whenever $i\neq j\in [n]\backslash X_k$. This gives $n - |X_k|$ distinct elements of $\cF$. Hence \(|X_k|\geq n - |\cF|\). Since $X_k$ is inclusion-minimal, this completes the proof.
\end{proof}

\begin{lemma}\label{phockandcallsd}
   Let \(S\in \cF\) be an inclusion-minimal element. Then $|S|\leq |\cF|$. 
\end{lemma}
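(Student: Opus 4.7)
The plan is to produce, for each element $i\in S$, a distinct witness $D_i\in\cF$ satisfying $D_i\cap S=S\setminus\{i\}$; this will immediately yield $|\cF|\geq|S|$. We may assume $S\neq\emptyset$, since otherwise the bound is trivial.

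First I would fix $i\in S$ and consider $T_i:=S\setminus\{i\}$. Because $S$ is inclusion-minimal in $\cF$, the set $T_i$ does not lie in $\cF$, so by $\cV_3$-saturation $\cF\cup\{T_i\}$ contains an induced copy of $\cV_3$ that must use $T_i$. My first substantive step is to argue that $T_i$ is forced to be the unique minimal element of this $\cV_3$: if $T_i$ were one of the three maximal elements, then the minimum element would be some $M\in\cF$ with $M\subsetneq T_i\subsetneq S$, contradicting inclusion-minimality of $S$.

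The main obstacle, though a short one, is the next step. From the three pairwise incomparable elements $A_i,B_i,C_i\in\cF$ sitting strictly above $T_i$, I need to extract at least one that does not contain $i$. If all three contained $i$, then each would contain $T_i\cup\{i\}=S$, and pairwise incomparability would force each to contain $S$ strictly; but then $S$ together with $A_i,B_i,C_i$ would already form an induced copy of $\cV_3$ inside $\cF$, a contradiction. Choose any such element not containing $i$ and call it $D_i$; since $D_i\supseteq T_i$ and $i\notin D_i$, we have $D_i\cap S=S\setminus\{i\}$.

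The final step is just bookkeeping: the intersection $D_i\cap S$ determines $i$, so the assignment $i\mapsto D_i$ is injective on $S$, producing $|S|$ distinct elements of $\cF$ and hence the desired inequality $|S|\leq|\cF|$.
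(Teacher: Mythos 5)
Your proof is correct and follows essentially the same approach as the paper: force $S\setminus\{i\}$ to be the minimal element of a $\cV_3$, find a top element avoiding $i$, and count the resulting distinct witnesses. Your argument that not all three top elements can contain $i$ (since they would then all strictly contain $S$ and form a $\cV_3$ with it) neatly unifies the paper's two cases according to whether one of $A,B,C$ equals $S$.
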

\begin{proof}
  We may assume that $S\neq \emptyset$ as otherwise the statement would trivially be true. 

  For each $i\in S$, we will exhibit an element $A\in \cF$ such that $S\backslash A = \{i\}$. This gives $|S|$ distinct elements of $\cF$ and thereby will complete the proof of the lemma.

  Suppose $i\in S$. By inclusion-minimality of $S$, $S\backslash\{i\}$ cannot be an element of $\cF$, so $\cF\cup \{S\backslash\{i\}\}$ must contain a copy of $\cV_3$ that uses $S\backslash\{i\}$ as an element. Let \(A,B,C\in \cF\) be such that when combined with $S\backslash \{i\}$ they form a copy of $\cV_3$. It is clear by minimality of $S$ that $S\backslash \{i\}$ must be the unique minimal element of the $\cV_3$. 

  Suppose $A\neq S$, $B\neq S$ and $C\neq S$. If \(i\in A\cap B\cap C\) then $S\subseteq A,B,C$ meaning that $A$, $B$, $C$ and $S$ form a copy of $\cV_3$ in $\cF$, a contradiction. Therefore we can assume without loss of generality that $i\not \in A$. Since \(S\backslash \{i\}\subseteq A\), we have $S\backslash A = \{i\}$ 

  On the other hand, if $C = S$ then $A$ and $S$ are incomparable and $S\backslash \{i\}\subseteq A$. This again means that $S\backslash A = \{i\}$.
\end{proof}
We now can complete the proof of Theorem \ref{maintheorem} by applying Lemmas \ref{ejfioewjife2222} and \ref{phockandcallsd}. By Lemma \ref{ejfioewjife2222}, there is an inclusion-minimal element $S\in \cF$ such that $|S|\geq n - |\cF|$. This means that $|\cF|\geq n - |\cF|$ so $|\cF|\geq \frac{n}{2}$.

\bibliographystyle{amsplain}
\bibliography{references}
\Addresses

\end{document}